\documentclass[11pt]{amsart}
\usepackage{graphicx}
\usepackage{amssymb, amsmath}
\vfuzz2pt 
\hfuzz2pt 
\newtheorem{theorem}{Theorem}[section]
\newtheorem{corollary}[theorem]{Corollary}
\newtheorem{lemma}[theorem]{Lemma}
\newtheorem{proposition}[theorem]{Proposition}
\theoremstyle{definition}
\newtheorem{definition}[theorem]{Definition}
\newtheorem{example}[theorem]{Example}

\theoremstyle{remark}


\begin{document}
\title{Completions of Lie Algebras}
\author{\sc M. Shahryari}
\thanks{}
\address{ Department of Pure Mathematics,  Faculty of Mathematical
Sciences, University of Tabriz, Tabriz, Iran }

\email{mshahryari@tabrizu.ac.ir}
\date{\today}

\begin{abstract}
A Lie algebra $K$ over a field of characteristic zero $E$ is called a completion of a rational Lie algebra $L$, if it contains $L$ as $\mathbb{Q}$-subalgebra and the $E$-span of $L$ is equal to $K$. The class of all completions of a rational Lie algebra is studied in this article.
\end{abstract}

\maketitle

{\bf AMS Subject Classification} Primary 17B05, Secondary 03C60.\\
{\bf Key Words} Lie algebras; completions; elementary classes; tensor product; changing base field; potentially finite dimensional Lie algebras.


\vspace{2cm}
Let $L$ be a Lie algebra over $\mathbb{Q}$, the field of rational numbers. A pair $(K, E)$ with $E$ a field characteristic zero and $K$ a Lie algebra over $E$, is a completion of $L$, if $L\leq_{\mathbb{Q}}K$ ( a $\mathbb{Q}$-Lie subalgebra), and $K=Span_E(L)$. In many situations, $\dim_{\mathbb{Q}}L$  is infinite, while $\dim_E K$ is finite. Therefore we can apply some properties of finite dimensional Lie algebras in characteristic zero to $L$. For example, we will show that, if $L$ is nilpotent, then it can be embedded in the algebra of strictly upper triangular $n\times n$ matrices with entries from $E$, for some $n$. In some cases $(L, E)$ is a completion of $L$, for example, let $L=\mathbb{R}^3_{\wedge}$, the infinite dimensional $\mathbb{Q}$-algebra of three dimensional real vectors with the wedge product as the Lie bracket. Although $L$ is infinite dimensional, its completion $(L, \mathbb{R})$ has dimension $3$. This observations motivate us to study the class of completions of rational Lie algebras. Most of the work here can be done for other fields, but we do it just for $\mathbb{Q}$. \\

The type of problems we are going to deal with in this article, are not new; at least in two situations, similar problems are studied. The first one is the study of centroid of Lie algebras. Recall that for a Lie algebra $L$, its centroid is defined by
$$
\Gamma(L)=\{ \alpha\in End(L): \forall x, y\in L\ \ [\alpha(x), y]=\alpha[x,y]\}.
$$
The centroid of $L$ is the largest ring in which $L$ is a Lie algebra over it. So if $(L, E)$ is a completion of $L$, then $E\leq \Gamma(L)$. If $L$ is simple, then its centroid is a field (see \cite{Jac}), so it is the largest field $E$, with $(L, E)$ a completion.\\

The second similar situation can be seen in the theory of exponential groups, where the concept of completion for groups is defined. The concept of centroid can be also defined for groups and it is the largest ring in which the group admits as a ring of scalars. For a complete description of exponential groups see \cite{Rem1} and \cite{Rem2}.\\

This article is arranged in three sections; in the first one, we show that any abelian rational Lie algebra ( a vector space over $\mathbb{Q}$) is potentially one dimensional. In the second section, we introduce the class $\mathfrak{X}_L$ of all completions of $L$ and we show that this class is elementary in the case   $\dim L<\infty$. In the section three, we introduce {\em entangled} ideals of $E\otimes_{\mathbb{Q}}L$ and we study their connections to completions. This  section contains also some results on the completions in some finite dimensional cases.

\section{ Abelian Lie algebras}
In this section, we show by an elementary argument that any abelian Lie algebra ( i.e. any vector space over $\mathbb{Q}$), is potentially one dimensional, i.e. for any vector space $L$ over $\mathbb{Q}$, there exists a field $E$ of characteristic zero, such that $\dim_E L=1$. Then we show that this can not be generalized for nilpotent Lie algebras of class $2$.

\begin{proposition}
Let $L$ be a vector space over $\mathbb{Q}$. Then there exists a field $E$ of characteristic zero, such that $L$ is a vector space over $E$ and $\dim_E L=1$.
\end{proposition}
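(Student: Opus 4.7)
The plan is to reduce the statement to a purely field-theoretic construction: produce a field $E$ of characteristic zero whose $\mathbb{Q}$-dimension matches that of $L$, and then transport the tautological $1$-dimensional $E$-vector space structure of $E$ onto $L$ via any $\mathbb{Q}$-linear bijection.

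Set $\kappa = \dim_{\mathbb{Q}} L$ and assume $\kappa \geq 1$. The first step is to exhibit a field $E\supseteq\mathbb{Q}$ of characteristic zero with $\dim_{\mathbb{Q}} E = \kappa$. If $\kappa = n$ is finite, any simple algebraic extension of degree $n$ will do, for instance $E = \mathbb{Q}(\alpha)$ for a root $\alpha$ of some irreducible polynomial of degree $n$ over $\mathbb{Q}$. If $\kappa$ is infinite, I would take the purely transcendental extension $E = \mathbb{Q}(x_i : i \in I)$ with $|I|=\kappa$. On the one hand $\dim_{\mathbb{Q}} E \leq |E| = \kappa$; on the other hand the distinct monomials in the $x_i$ are $\mathbb{Q}$-linearly independent inside $E$ and there are $\kappa$ of them, so $\dim_{\mathbb{Q}} E = \kappa$.

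Next, since $L$ and $E$ are both $\mathbb{Q}$-vector spaces of the same dimension $\kappa$, fix any $\mathbb{Q}$-linear isomorphism $\phi : E \to L$ and define an action of $E$ on $L$ by
\[
e \cdot l \ := \ \phi\bigl(e \cdot \phi^{-1}(l)\bigr), \qquad e\in E,\ l\in L,
\]
where the product on the right is the field multiplication in $E$. A direct verification shows that this turns $L$ into an $E$-vector space and makes $\phi$ an $E$-linear isomorphism, so $\dim_E L = 1$. Because $\phi$ is $\mathbb{Q}$-linear, the restriction of this new action to the prime subfield $\mathbb{Q} \subseteq E$ recovers the original $\mathbb{Q}$-vector space structure of $L$, which is what is needed for the result to be meaningful in the completion framework set up in the introduction.

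The only substantive obstacle is the cardinal computation showing that $\dim_{\mathbb{Q}} \mathbb{Q}(x_i : i \in I) = |I|$ for infinite $I$; once that is in place, everything else is routine transport of structure.
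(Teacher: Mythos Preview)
Your proof is correct and follows essentially the same route as the paper: construct a field $E$ with $\dim_{\mathbb{Q}}E=\dim_{\mathbb{Q}}L$ (a degree-$n$ algebraic extension in the finite case, the rational function field $\mathbb{Q}(x_i:i\in I)$ in the infinite case) and then transport the $E$-structure along a $\mathbb{Q}$-linear isomorphism. The only cosmetic difference is that in the finite case the paper writes out the action of a primitive element on a chosen basis explicitly (the companion-matrix action), whereas you handle both cases uniformly via transport of structure; these are the same construction in different packaging.
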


\begin{proof}
First, let $\dim_{\mathbb{Q}}L=n$ and $x_1, \ldots, x_n$ be a basis of $L$. Suppose
$$
q(t)=a_0+a_1t+\cdots+a_{n-1}t^{n-1}+t^n\in \mathbb{Q}[t]
$$
be an irreducible polynomial. Let $\lambda\in \mathbb{C}$ be any root of $q(t)$ and $E=\mathbb{Q}(\lambda)$. Define an action of $E$ on $L$ by the $\mathbb{Q}$-bilinear extension of following rules
\begin{eqnarray*}
\lambda.x_1&=&x_2\\
\lambda.x_2&=&x_3\\
           &\vdots&\\
\lambda.x_n&=&-a_0x_1-a_1x_2-\cdots-a_{n-1}x_n.
\end{eqnarray*}
It is easy to check that $L$ is a vector space over $E$ of dimension $1$. Now, suppose $\dim_\mathbb{Q}L=\alpha$ is infinite. Let $X$ be any set with the cardinality $\alpha$ and suppose $\mathbb{Q}(X)$ is the field rational functions with variables in the set $X$. Note that
$$
\mathbb{Q}(X)=\bigcup_{Y\subset X}\mathbb{Q}(Y),
$$
where $Y$ varies in the set of all finite subsets of $X$. Since every $\mathbb{Q}(Y)$ is countable so we have $|\mathbb{Q}(X)|=|X|$. Hence as $\mathbb{Q}$-spaces, we have $L\cong\mathbb{Q}(X)$. Let $f:L\to \mathbb{Q}(X)$ be a $\mathbb{Q}$-isomorphism. Now for $q\in \mathbb{Q}(X)$ and $x\in L$, define
$$
q.x=f^{-1}(qf(x)).
$$
So, by assuming $E=\mathbb{Q}(X)$, $L$ is an $E$-space with $\dim_E L=1$.
\end{proof}

One may wants to generalize this proposition for the case of nilpotent Lie algebras of higher nilpotency classes. But this is not true in general, since the three dimensional Heisenberg algebra $H=\langle x, y, z\rangle$ with $[x,y]=z$ and $z\in Z(H)$ is a nilpotent rational Lie algebra of class two but there is no field $E$ with $\dim_E H=2$. In general, the extension degree $[E:\mathbb{Q}]$ is restricted by the numbers $\dim_{\mathbb{Q}}C_L(x)$, $x\in L$. To see this, let $L$ be any Lie algebra over $E$. Then clearly for any $x\in L$, we have $Span_E(x)\subseteq C_L(x)$, so
$$
[E:\mathbb{Q}]=\dim_{\mathbb{Q}}Span_E(x)\leq \dim_{\mathbb{Q}}C_L(x).
$$
Therefore, we have

\begin{lemma}
Let $L$ be a rational Lie algebra and $E$ be any field such that $(L, E)$ is a completion for $L$. Then
$$
[E:\mathbb{Q}]\leq \min_{x\in L}\ \dim_{\mathbb{Q}}C_L(x).
$$
\end{lemma}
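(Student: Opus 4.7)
The plan is simply to formalize the computation that the author has already sketched in the paragraph preceding the lemma. Fix an arbitrary nonzero $x \in L$ (if $L=0$ there is nothing to prove, and for the min we only need the inequality for at least one $x$). Since by hypothesis $L$ is a Lie algebra over $E$, the bracket is $E$-bilinear. Combined with the alternating identity $[x,x]=0$, this gives, for every $\alpha \in E$,
$$[\alpha x, x] \;=\; \alpha[x,x] \;=\; 0,$$
so the $E$-line $Ex = \mathrm{Span}_E(x)$ lies inside $C_L(x)$.

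Next I would observe that, because $L$ is a genuine $E$-vector space (every nonzero scalar acts as an invertible $\mathbb{Q}$-linear map), the $\mathbb{Q}$-linear evaluation map $E \to L$, $\alpha \mapsto \alpha x$, is injective as soon as $x \neq 0$. Hence its image $Ex$ has $\mathbb{Q}$-dimension exactly $[E:\mathbb{Q}]$, and the containment $Ex \subseteq C_L(x)$ yields
$$[E:\mathbb{Q}] \;=\; \dim_{\mathbb{Q}}(Ex) \;\leq\; \dim_{\mathbb{Q}} C_L(x).$$
Since the left-hand side is independent of $x$ while the right-hand side depends on $x$, taking the minimum over nonzero $x \in L$ (equivalently, over all $x$, after noting the trivial case) gives the desired bound.

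There is no substantive obstacle in this argument; the only point that deserves a line of care is verifying that $\alpha \mapsto \alpha x$ is $\mathbb{Q}$-linearly injective on a single nonzero $x$, and this is immediate from $L$ being a faithful $E$-module (equivalently, $E \hookrightarrow \Gamma(L)$, as noted in the introduction). The rest is bookkeeping.
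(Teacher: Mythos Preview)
Your argument is correct and is exactly the paper's own argument: show $\mathrm{Span}_E(x)\subseteq C_L(x)$ via $E$-bilinearity and $[x,x]=0$, then compare $\mathbb{Q}$-dimensions. One wording slip: the parenthetical ``for the min we only need the inequality for at least one $x$'' is backwards---bounding a minimum requires the inequality for \emph{every} $x$---but your actual proof does establish it for arbitrary nonzero $x$, so the logic is sound.
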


By a similar argument, we can prove the following proposition.

\begin{proposition}
Let $L$ be an infinite dimensional rational Lie algebra which has a finite dimensional completion of the form $(L, E)$. Then

\ i- either $L$ is solvable or for any ordinal $\alpha$ we have $\dim_{\mathbb{Q}}L^{(\alpha)}=\infty$.

ii- either $L$ is nilpotent or for any ordinal $\alpha$ we have $\dim_{\mathbb{Q}}\gamma_{\alpha}(L)=\infty$.
\end{proposition}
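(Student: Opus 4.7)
The plan is to exploit the compatibility $\lambda[x,y]=[\lambda x,y]$ between the $E$-action and the Lie bracket. This forces each term of the derived series and of the lower central series of $L$ to be the same subset of $L$ whether one computes in the $\mathbb{Q}$-algebra $L$ or in the $E$-algebra $L$, and in particular every such term is automatically an $E$-subspace.

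First I would observe that the hypotheses force $[E:\mathbb{Q}]=\infty$. Indeed $\dim_{\mathbb{Q}}L=[E:\mathbb{Q}]\cdot\dim_E L$, so an infinite-dimensional rational $L$ with finite $\dim_E L$ requires $[E:\mathbb{Q}]$ to be infinite; consequently every nonzero $E$-subspace of $L$ has infinite $\mathbb{Q}$-dimension. Next I would prove by transfinite induction on $\alpha$ that the subset $L^{(\alpha)}\subseteq L$ defined via the $\mathbb{Q}$-bracket is already an $E$-subspace, and hence agrees with the $\alpha$-th derived term of the $E$-algebra $L$. The successor step reduces to the remark that the $\mathbb{Q}$-span and the $E$-span of a set of commutators $\{[x,y]:x,y\in L^{(\alpha)}\}$ coincide, because $\lambda[x,y]=[\lambda x,y]$; at a limit ordinal the claim is immediate since an intersection of $E$-subspaces is an $E$-subspace. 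The identical argument shows that $\gamma_\alpha(L)$ is an $E$-subspace for every ordinal $\alpha$.

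With these preliminaries in hand, (i) follows quickly. Since $L$ is finite dimensional over $E$, the descending chain of $E$-ideals $L\supseteq L^{(1)}\supseteq L^{(2)}\supseteq\cdots$ must stabilize at some integer $n$; either $L^{(n)}=0$, in which case $L$ is solvable, or $L^{(n)}\neq 0$, in which case transfinite induction gives $L^{(\alpha)}\supseteq L^{(n)}$ for every ordinal $\alpha$, so $\dim_{\mathbb{Q}}L^{(\alpha)}\geq[E:\mathbb{Q}]=\infty$. Part (ii) is proved by the same argument, with $\gamma_n(L)$ in place of $L^{(n)}$ and with stabilization taking the form $\gamma_n(L)=[L,\gamma_n(L)]$, which together with transfinite induction yields $\gamma_\alpha(L)=\gamma_n(L)$ for all $\alpha\geq n$.

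The only subtle point is the transfinite induction, in particular the behaviour at limit ordinals; but this is a purely formal bookkeeping once one has the key observation that derived and lower central terms computed over $\mathbb{Q}$ are automatically $E$-subspaces. No additional structural result on $L$ is needed.
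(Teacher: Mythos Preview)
Your argument is correct and matches the approach the paper has in mind. The paper does not give a written proof of this proposition at all; it only says ``by a similar argument'' after the observation $\mathrm{Span}_E(x)\subseteq C_L(x)$ preceding Lemma~1.2, and your proof spells out exactly that similar argument: the compatibility $\lambda[x,y]=[\lambda x,y]$ makes each $L^{(\alpha)}$ and $\gamma_\alpha(L)$ an $E$-subspace, so any nonzero term has $\mathbb{Q}$-dimension at least $[E:\mathbb{Q}]=\infty$, and finite $E$-dimension forces stabilization at a finite stage.
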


\section{The class of completions}
In this section, we show that the class of completions of any finite dimensional rational Lie algebra is elementary ( axiomatizable in some first order language). We also show that this is not true for the class of all fields $E$ in which $L$ is a Lie algebra over $E$. Remember that a pair $(K, E)$ is a completion of $L$, if

\ \ i- $E$ is a field of characteristic zero,

\ ii- $K$ is a Lie algebra over $E$,

iii- $L\leq_{\mathbb{Q}}K$ ( a $\mathbb{Q}$-Lie subalgebra),

iv- $K=Span_E(L)$.

Let $\mathfrak{X}_L$ be the class of all completions of $L$.

\begin{theorem}
If $\dim_{\mathbb{Q}}L$ is finite, then the class $\mathfrak{X}_L$ is elementary.
\end{theorem}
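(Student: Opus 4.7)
The plan is to construct an explicit first-order theory whose models are exactly the completions of $L$, exploiting the finiteness of $\dim_{\mathbb{Q}} L$ in a crucial way. Fix once and for all a $\mathbb{Q}$-basis $x_1,\dots,x_n$ of $L$ with structure constants $a_{ij}^{k}\in\mathbb{Q}$ given by $[x_i,x_j]=\sum_k a_{ij}^{k}x_k$. I work in a two-sorted language $\mathcal{L}$ with one sort $S_E$ for the field and one sort $S_K$ for the algebra: the field operations $+,\cdot,0,1$ on $S_E$; the vector-space operations $+,0$ and bracket $[\,\cdot\,,\,\cdot\,]$ on $S_K$; a scalar multiplication $S_E\times S_K\to S_K$; and $n$ distinguished constants $c_1,\dots,c_n$ of sort $S_K$, which are meant to name the images of $x_1,\dots,x_n$.

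Next I would write down the defining axioms in four groups.
\textbf{(A)} Field axioms for $S_E$ together with the characteristic-zero schema $\{\underbrace{1+\cdots+1}_{m}\neq 0:m\geq 1\}$.
\textbf{(B)} The usual finitely-many axioms expressing that $S_K$ is a Lie algebra over $S_E$: module axioms, bilinearity of the bracket, antisymmetry, and Jacobi.
\textbf{(C)} For each pair $i,j$, the single sentence encoding $[c_i,c_j]=\sum_k a_{ij}^{k}c_k$; since $a_{ij}^{k}\in\mathbb{Q}$ and $\mathbb{Q}$ embeds into every characteristic-zero field, clearing denominators turns this into a genuine $\mathcal{L}$-sentence (of the form $(N\cdot 1)[c_i,c_j]=\sum_k (Np_{ij}^{k}\cdot 1)c_k$ for suitable integers).
\textbf{(D)} The $\mathbb{Q}$-linear-independence schema: for every nonzero tuple $(r_1,\dots,r_n)\in\mathbb{Q}^n$, the sentence $r_1 c_1+\dots+r_n c_n\neq 0$, again cleared of denominators.
\textbf{(E)} The single spanning axiom $\forall y\,\exists e_1,\dots,e_n\, (y=e_1 c_1+\dots+e_n c_n)$. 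This last axiom is the step where finiteness of $\dim_{\mathbb{Q}} L$ is essential: without it one would need an infinitary disjunction over the length of the linear combination.

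Let $T_L$ denote the union of these axioms. To finish, I would verify both directions. Given a completion $(K,E)$ of $L$, interpreting $c_i$ as $x_i\in L\subseteq K$ produces a model of $T_L$: clauses (A)--(C) hold by the definition of a completion, (D) holds because $x_1,\dots,x_n$ are $\mathbb{Q}$-linearly independent in $L$, and (E) holds because $K=\mathrm{Span}_E(L)=\mathrm{Span}_E(x_1,\dots,x_n)$. Conversely, given any $\mathcal{L}$-structure $(K,E)$ satisfying $T_L$, axioms (C) and (D) guarantee that $\mathrm{Span}_{\mathbb{Q}}(c_1,\dots,c_n)$ is a $\mathbb{Q}$-Lie subalgebra of $K$ isomorphic to $L$ via $x_i\mapsto c_i$; identifying $L$ with its image, axiom (E) then gives $K=\mathrm{Span}_E(L)$, so $(K,E)\in\mathfrak{X}_L$.

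The only real subtlety, and where I would focus the write-up, is the bookkeeping for expressing $\mathbb{Q}$-scalars and rational relations inside $\mathcal{L}$; this is where the schemas in (A), (C), (D) appear, and one must check that although infinitely many sentences are used, each is first-order, so $T_L$ is a first-order theory and $\mathfrak{X}_L=\mathrm{Mod}(T_L)$ is elementary. No finite axiomatizability is claimed, and indeed the characteristic-zero and $\mathbb{Q}$-independence schemas are genuinely needed.
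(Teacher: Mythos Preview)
Your proof is correct and follows essentially the same strategy as the paper: exhibit an explicit first-order theory whose models are precisely the completions of $L$, with finite-dimensionality entering exactly where you put it, in the spanning axiom. The paper's formalization differs only cosmetically---it works in a one-sorted language with a unary predicate $q$ separating scalars from vectors, and it names \emph{every} element of $L$ by a constant $a^{\ast}$ rather than just a basis---but the underlying argument is identical.
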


\begin{proof}
Let $v_1, \ldots, v_n$ be a $\mathbb{Q}$-basis of $L$. We introduce a first order language
$$
\mathcal{L}=(0, 1, +, \times, q, (a^{\ast}, a\in L))
$$
which contains constant symbols $0$, $1$ and $a^{\ast}$ for all $a\in L$. It also has two binary functional symbols $+$ and $\times$, and $q$ is a unary predicate symbol which will be interpreted as "{\em to be scalar}". Let $\Sigma$ be the following set of axioms:\\

\ 1- $\forall x\forall y\ (q(x)\wedge q(y))\Rightarrow (q(x+y)\wedge q(xy))$.

\ 2- $\forall x\forall y\ (q(x)\wedge q(y))\Rightarrow (x+y=y+x \wedge x\times y=y\times x)$.

\ 3- $\forall x\forall y\forall z\ (q(x)\wedge q(y)\wedge q(z))\Rightarrow ((x+y)+z=x+(y+z)\wedge (x\times y)\times z=x\times (y\times z))$.

\ 4- $\forall x\ q(x)\Rightarrow x+0=x$.

\ 5- $\forall x\ 1\times x=x$.

\ 6- $\forall x\ q(x)\Rightarrow (\exists y\ q(y)\wedge x+y=0)$.

\ 7- $\forall x\forall y\forall z \ (q(x)\wedge q(y)\wedge q(z))\Rightarrow (x\times (y+z)=x\times y+x\times z)$.

\ 8- $\forall x\ (q(x)\wedge x\neq 0)\Rightarrow (\exists y\ q(y)\wedge x\times y=1)$.

\ 9- $\forall x\forall y\ (\neg q(x)\wedge \neg q(y))\Rightarrow(\neg q(x+y)\wedge q(x\times y))$.

10- $\forall x\forall y\ (\neg q(x)\wedge \neg q(y))\Rightarrow (x+y)=y+x$.

11- $\forall x\ \neg q(x)\Rightarrow x\times x=0^{\ast}$.

12- $\forall x\forall y\forall z\ (\neg q(x)\wedge \neg q(y)\wedge \neg q(z))\Rightarrow (x+y)+z=x+(y+z)$.

13- $\forall x\ \neg q(x)\Rightarrow x+0^{\ast}=x$.

14- $\forall x\ \neg q(x)\Rightarrow(\exists y\  \neg q(y)\wedge x+y=0^{\ast})$.

15- $\forall x\forall y\forall z( \neg q(x)\wedge \neg q(y)\wedge \neg q(z))\Rightarrow (x\times y)\times z+(y\times z)\times x+(z\times x)\times y=0^{\ast}$.

16- $\forall x\forall y\forall z(\neg q(y)\wedge \neg q(z))\Rightarrow x\times(y+z)=x\times y+x\times z$.

17- $\forall x\forall y(q(x)\wedge \neg q(y))\Rightarrow \neg q(x\times y)$.

18- $\forall x\forall y\forall z( q(x)\wedge \neg q(y)\wedge \neg q(z))\Rightarrow x\times (y\times z)=(x\times y)\times z= y\times (x\times z)$.

19- $\forall x\forall y\forall z( q(x)\wedge q(y)\wedge \neg q(z))\Rightarrow (x\times y)\times z= x\times (y\times z)$.

20- $\forall x\forall y\forall z( q(x)\wedge q(y)\wedge \neg q(z))\Rightarrow (x+ y)\times z= x\times z+ y\times z$.

21- for all $a\in L$ the axiom: $\neg q(a^{\ast})$.

22- for all $a, b\in L$ the axiom: $a^{\ast}\neq b^{\ast}$.

23- for all $a, b\in L$ the axioms: $(a+b)^{\ast}=a^{\ast}+b^{\ast}$ and $[a^{\ast}, b^{\ast}]=a^{\ast}\times b^{\ast}$.

24- for all natural number $m$ the axiom:  $\forall x\ (q(x)\wedge mx=0)\Rightarrow x=0 $.

25- $\forall x\ \neg q(x)\Rightarrow (\exists x_1\exists x_2 \ldots \exists x_n\ (\bigwedge_{i=1}^n q(x_i))\wedge (x=\sum_{i=1}^nx_i\times v_i))$.\\

Now, let $M\in Mod(\Sigma)$ be a model of $\Sigma$. Define
$$
K=\{ x\in M: \neg q(x)\}
$$
and
$$
E=\{x\in M: q(x)\}.
$$
Then it is easy to verify that $(K, E)$ is a completion of $L$. Conversely if $(K, E)$ is a completion of $L$ then we let $M=K\cup E$. For all $x, y\in M$, define $x+y$ to be their sum in $E$, if $x, y\in E$, their sum in $K$ if, $x, y\in K$, and $0^{\ast}$ otherwise. Similarly, define $x\times y$ to be $xy$, if $x, y\in E$, $[x,y]$, if $x, y\in K$, and the scalar product otherwise. It can be verified that $M$ is a model for $\Sigma$ and so $\Sigma$ axiomatizes the class $\mathfrak{X}_L$.
\end{proof}

It is clear that the class of all Lie algebras over a fixed field $E$ is elementary. We show that the class of all fileds for a fixed rational Lie algebra is not elementary. Let $L$ be such a Lie algebra. By $\mathfrak{F}_L$ we denote the class of all fields $E$ such that $(L, E)$ is a completion of $L$. Note that every element of $\mathfrak{F}_L$ is a sub-ring of the centroid $\Gamma(L)$.

\begin{proposition}
Let $L$ be a finite dimensional Lie algebra over $\mathbb{Q}$. Then $\mathfrak{F}_L$ is not elementary.
\end{proposition}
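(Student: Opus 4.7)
The plan is to derive a contradiction from the fact that elementary classes are closed under ultraproducts. First I would record two basic observations: the pair $(L,\mathbb{Q})$ is trivially a completion, so $\mathbb{Q}\in\mathfrak{F}_L$; and by the preceding lemma, every $E\in\mathfrak{F}_L$ satisfies
\[
[E:\mathbb{Q}]\;\le\;\min_{x\in L}\dim_{\mathbb{Q}}C_L(x)\;\le\;\dim_{\mathbb{Q}}L=:n<\infty
\]
(assuming the nontrivial case $L\ne 0$). In particular every member of $\mathfrak{F}_L$ is a number field of degree at most $n$, and hence is countable.

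Next I would exhibit a field elementarily equivalent to $\mathbb{Q}$ that is not in $\mathfrak{F}_L$. Let $\mathcal{U}$ be any nonprincipal ultrafilter on $\mathbb{N}$ and set $F=\mathbb{Q}^{\mathbb{N}}/\mathcal{U}$. By \L o\'{s}'s theorem, $F$ is a field of characteristic zero which is elementarily equivalent to $\mathbb{Q}$ in the language of rings. A standard counting argument gives $|F|=2^{\aleph_0}$, so $F$ is uncountable; since every finite extension of $\mathbb{Q}$ is countable, $[F:\mathbb{Q}]$ must be infinite, and in particular $[F:\mathbb{Q}]>n$. By the bound recorded above, $F\notin\mathfrak{F}_L$.

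If $\mathfrak{F}_L$ were elementary then it would be closed under ultraproducts, so from $\mathbb{Q}\in\mathfrak{F}_L$ we would obtain $F\in\mathfrak{F}_L$, contradicting the previous paragraph; hence $\mathfrak{F}_L$ is not elementary. The only ingredient specific to the setup is the degree bound, which is exactly the preceding lemma; the rest is standard model theory. An alternative route that avoids the cardinality computation for ultrapowers is to apply upward L\"owenheim--Skolem to produce an uncountable elementary extension of $\mathbb{Q}$, which by the same dimension bound again lies outside $\mathfrak{F}_L$. The main conceptual obstacle is purely cosmetic: one has to fix the language (of rings, or of rings enriched with constants for each rational, parallel to the previous theorem) and check that \L o\'{s}'s theorem is applied in that language; in both natural choices the argument is identical.
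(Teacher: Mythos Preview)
Your proof is correct and shares the paper's core idea: every $E\in\mathfrak{F}_L$ has finite degree over $\mathbb{Q}$ (hence is countable), while an elementary class containing $\mathbb{Q}$ must have uncountable members. The paper's proof takes precisely the L\"owenheim--Skolem route you mention as an alternative at the end rather than the ultraproduct route, and it obtains the degree bound directly from the identity $\dim_{\mathbb{Q}}L=[E:\mathbb{Q}]\cdot\dim_E L$ (valid because here $K=L$) rather than via the centralizer lemma; apart from these cosmetic choices the arguments coincide.
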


\begin{proof}
Let $\mathcal{L}$ be a first order language and $\Sigma$ be a set of axioms of $\mathfrak{F}_L$. Since $L$ is countable, we may assume that $\mathcal{L}$ is also countable. Now $\mathbb{Q}$ is a model for $\Sigma$ and $|\mathbb{Q}|\geq |\mathcal{L}|$. So by the Lowenheim-Skolm theorem, for any infinite cardinal $\alpha$, the set $\Sigma$ has a model $E$ of size $\alpha$. Now
$$
\dim_{\mathbb{Q}}L=[E:\mathbb{Q}]\dim_E L,
$$
and the left hand side is finite. So $[E:\mathbb{Q}]$ is finite, contradicting $|E|=\alpha$.
\end{proof}

Similarly, one can prove that the class $\mathfrak{F}_L$ is not elementary for any abelian Lie algebra $L$.

\section{Entangled ideals}
In this section, we introduce the concept of {\em entangled} ideals in a tensor product $E\otimes_{\mathbb{Q}}L$ and we show that they have close connections with the completions of $L$. For a rational Lie algebra $L$ and any field $E$ of characteristic zero, we say that a Lie algebra $K$ is an $E$-completion of $L$, if $(K,E)$ is a completion for $L$.

\begin{definition}
An ideal $N$ of the $E$-algebra $E\otimes_{\mathbb{Q}}L$ is called entangled, if $N\cap (1\otimes L)=0$.
\end{definition}

\begin{proposition}
Let $K$ be an $E$-completion of $L$. Then there exists an entangled ideal $N\unlhd E\otimes_{\mathbb{Q}}L$ such that
$$
K=\frac{E\otimes_{\mathbb{Q}}L}{N}.
$$
Conversely, for any entangled ideal $N$, the Lie algebra $K$ defined by the above equality is an $E$-completion.
\end{proposition}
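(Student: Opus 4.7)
The plan is to use the universal property of base extension to produce, for each $E$-completion $K$, a canonical surjection $E\otimes_{\mathbb{Q}}L\twoheadrightarrow K$, and to verify that its kernel is exactly the data encoded by the entanglement condition. For the forward direction, I would start from the inclusion $L\hookrightarrow K$, which is $\mathbb{Q}$-linear, and observe that $K$ is an $E$-module. The $\mathbb{Q}$-bilinear map $E\times L\to K$, $(e,x)\mapsto e\cdot x$, then factors through a unique $E$-linear map
$$
\varphi:E\otimes_{\mathbb{Q}}L\longrightarrow K,\qquad \varphi(e\otimes x)=e\cdot x.
$$
I would then check that $\varphi$ is a Lie homomorphism: the natural bracket $[e_1\otimes x_1,e_2\otimes x_2]=e_1e_2\otimes[x_1,x_2]$ on $E\otimes_{\mathbb{Q}}L$ is mapped to $e_1e_2[x_1,x_2]_K=[e_1x_1,e_2x_2]_K$, using that the bracket on $K$ is $E$-bilinear.

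Next I would set $N=\ker\varphi$. Surjectivity of $\varphi$ follows immediately from $K=\mathrm{Span}_E(L)$, and since $\varphi(1\otimes x)=x$ for every $x\in L$ and $L\hookrightarrow K$ is injective, the condition $1\otimes x\in N$ forces $x=0$. Thus $N\cap(1\otimes L)=0$, so $N$ is entangled and the first isomorphism theorem gives $K\cong(E\otimes_{\mathbb{Q}}L)/N$.

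For the converse, given an entangled ideal $N$, I would set $K=(E\otimes_{\mathbb{Q}}L)/N$, a Lie algebra over $E$ in the obvious way, and define $\iota:L\to K$ by $\iota(x)=1\otimes x+N$. This is a $\mathbb{Q}$-Lie homomorphism because $[1\otimes x,1\otimes y]=1\otimes[x,y]$, and it is injective exactly because $N$ is entangled. Finally, every element of $K$ has the form $\sum_i(e_i\otimes x_i)+N=\sum_i e_i\cdot(1\otimes x_i+N)=\sum_i e_i\cdot\iota(x_i)$, so $K=\mathrm{Span}_E(\iota(L))$. After identifying $L$ with $\iota(L)$, the pair $(K,E)$ is a completion.

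The only real obstacle is a bookkeeping one: one must be careful that the bracket on $E\otimes_{\mathbb{Q}}L$ is well defined (which uses that $L$ is a $\mathbb{Q}$-algebra so that $[x,y]$ is $\mathbb{Q}$-bilinear and passes through the tensor product) and that the identification of $L$ with its image $1\otimes L$ inside $E\otimes_{\mathbb{Q}}L/N$ is consistent in both directions. Both are standard base-change checks, so no deeper difficulty is expected beyond assembling the diagrams carefully.
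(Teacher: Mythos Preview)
Your argument is correct and follows essentially the same route as the paper: construct the surjection $E\otimes_{\mathbb{Q}}L\to K$ from the $\mathbb{Q}$-bilinear map $(e,x)\mapsto e\cdot x$, identify $N$ as its kernel and check $N\cap(1\otimes L)=0$, and for the converse embed $L$ via $x\mapsto 1\otimes x+N$. The only differences are cosmetic---you spell out a few more verifications (e.g.\ the explicit span computation in the converse) than the paper does.
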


\begin{proof}
Define a map $\varphi:E\times L\to K$ by $\varphi(x, a)=xa$. This map is  $\mathbb{Q}$-bilinear and so there is a $\mathbb{Q}$-linear map $\varphi^{\ast}:E\otimes_{\mathbb{Q}}L\to K$ with $\varphi^{\ast}(x\otimes a)=xa$. Since $K=Span_E(L)$, so $\varphi^{\ast}$ is surjective. Also for any $\lambda\in E$, we have
$$
\varphi^{\ast}(\lambda(x\otimes a))=\varphi^{\ast}((\lambda x)\otimes a)= \lambda \varphi^{\ast}(x\otimes a),
$$
hence $\varphi^{\ast}$ is $E$-linear. Further
\begin{eqnarray*}
\varphi^{\ast}([x\otimes a, y\otimes b])&=& \varphi^{\ast}((xy)\otimes [a,b])\\
                                        &=&xy[a,b]\\
                                        &=&[\varphi^{\ast}(x\otimes a), \varphi^{\ast}(y\otimes b)].
\end{eqnarray*}
So, $\varphi^{\ast}$ is an epimorphism of $E$-Lie algebras. Let $N=\ker \varphi^{\ast}$. Then
$$
K\cong \frac{E\otimes_{\mathbb{Q}}L}{N}.
$$
Note that
$$
N=\{ \sum_i x_i\otimes a_i\in E\otimes_{\mathbb{Q}}L:\ \sum_ix_ia_i=0\},
$$
so if $1\otimes a\in N$, then $a=0$ and therefore $N$ is an entangled ideal.

Conversely, let $N\unlhd E\otimes_{\mathbb{Q}}L$ be an entangled ideal. Let $K$ be defined as above. Then the map $a\mapsto 1\otimes a+N$ is a $\mathbb{Q}$-embedding of $L$ in $K$. So, we may assume that $L\leq_\mathbb{Q}K$. Now clearly $K=Span_E(L)$ and so $K$ is an $E$-completion of $L$.
\end{proof}

\begin{example}
Let $L$ be a Lie algebra over $E$. Then in the same time $L$ is a Lie algebra over $\mathbb{Q}$. Let $I\unlhd_{\mathbb{Q}}L$, $f\in Aut_{\mathbb{Q}}(L)$, and $\sigma\in Aut(E)$. Define
$$
N(I,f,\sigma)=\{ \sum_i x_i\otimes a_i\in E\otimes_{\mathbb{Q}}I:\ \sum_i\sigma(x_i)f(a_i)=0\}.
$$
Note that the map $\varphi:E\times I\to L$ defined by $\varphi(x,a)=\sigma(x)f(a)$ is $\mathbb{Q}$-bilinear and so $N(I,f,\sigma)$ is well-defined. It can be easily seen that $N(I,f,\sigma)$ is an entangled ideal. So we have a completion
$$
K(I,f,\sigma)=\frac{E\otimes_{\mathbb{Q}}L}{N(I,f,\sigma)}.
$$
\end{example}

\begin{proposition}
Let $E$ be a finite extension of $\mathbb{Q}$ and $L$ be a finite dimensional Lie algebra over $E$. With the above notations, we have
$$
\dim_E K(I,f,\sigma)=\dim_E K(f(I), 1,1),
$$
and in the special case,
$$
dim_E K(L, f, \sigma)=\dim_E L.
$$
\end{proposition}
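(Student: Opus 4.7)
The plan is to compute $\dim_E K(I,f,\sigma)$ via rank--nullity applied to the map $\varphi^{\ast}: E\otimes_{\mathbb{Q}} I\to L$, $x\otimes a\mapsto \sigma(x)f(a)$, whose kernel is by definition $N(I,f,\sigma)$. Since $\dim_E(E\otimes_{\mathbb{Q}}L)=\dim_{\mathbb{Q}}L$, we have $\dim_E K(I,f,\sigma)=\dim_{\mathbb{Q}}L-\dim_E N(I,f,\sigma)$, so the claimed equality $\dim_E K(I,f,\sigma)=\dim_E K(f(I),1,1)$ reduces to showing $\dim_E N(I,f,\sigma)=\dim_E N(f(I),1,1)$.

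First I would note that although $\varphi^{\ast}$ fails to be $E$-linear, it is $\sigma$-semilinear: $\varphi^{\ast}(\lambda v)=\sigma(\lambda)\varphi^{\ast}(v)$ for all $\lambda\in E$. Since $\sigma$ is a bijection of $E$ fixing $0$, the kernel is nevertheless an $E$-subspace, and the usual rank--nullity identity over $E$ goes through. Using $\sigma(E)=E$, one sees that $\mathrm{im}(\varphi^{\ast})=Span_E(f(I))$, so
$$
\dim_E N(I,f,\sigma)=\dim_{\mathbb{Q}}I-\dim_E Span_E(f(I)).
$$

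Next, I would apply the analogous calculation to the honestly $E$-linear map $x\otimes b\mapsto xb$ on $E\otimes_{\mathbb{Q}}f(I)$, obtaining $\dim_E N(f(I),1,1)=\dim_{\mathbb{Q}}f(I)-\dim_E Span_E(f(I))$. Because $f\in Aut_{\mathbb{Q}}(L)$ we have $\dim_{\mathbb{Q}}f(I)=\dim_{\mathbb{Q}}I$, and the two expressions agree, establishing the main identity.

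For the special case $I=L$, the set $f(L)=L$, and since $L$ is already an $E$-subspace of itself, $Span_E(f(L))=L$ has $E$-dimension $\dim_E L$. Substituting gives $\dim_E K(L,f,\sigma)=\dim_{\mathbb{Q}}L-\dim_{\mathbb{Q}}L+\dim_E L=\dim_E L$. The only subtlety requiring care is rank--nullity in the semilinear setting; this is routine but easy to overlook if one tries to treat $\varphi^{\ast}$ as $E$-linear when $\sigma\neq 1$.
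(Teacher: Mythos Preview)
Your argument is correct. For the main identity, the paper takes a somewhat different route: rather than computing the two $E$-dimensions of the kernels separately and comparing, it builds a single $\mathbb{Q}$-linear automorphism $\sigma\otimes f$ of $E\otimes_{\mathbb{Q}}L$ (sending $x\otimes a$ to $\sigma(x)\otimes f(a)$) and checks directly that it carries $N(I,f,\sigma)$ onto $N(f(I),1,1)$; equality of the $\mathbb{Q}$-dimensions of the two kernels follows, and then one divides by the finite degree $[E:\mathbb{Q}]$ to get equality of $E$-dimensions. Your approach via semilinear rank--nullity is a bit more computational but yields the closed formula $\dim_E N(I,f,\sigma)=\dim_{\mathbb{Q}}I-\dim_E Span_E(f(I))$, from which both the general identity and the special case $I=L$ drop out at once. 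The paper's bijection sidesteps the semilinearity issue entirely (only $\mathbb{Q}$-linearity of $\sigma\otimes f$ is used), whereas you confront that subtlety head-on and handle it correctly.
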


\begin{proof}
Define a $\mathbb{Q}$-linear map $\sigma\otimes f:E\otimes_{\mathbb{Q}}L\to E\otimes_{\mathbb{Q}}L$ by $(\sigma\otimes f)(x\otimes a)=\sigma(x)f(a)$. This is an invertible map and we have
$$
(\sigma\otimes f)(N(I,f,\sigma))=N(f(I),1,1).
$$
Therefore $\dim_{\mathbb{Q}}N(I,f,\sigma)=\dim_{\mathbb{Q}}N(f(I), 1,1)$. Now, since $[E:\mathbb{Q}]$ is finite, so
$$
\dim_EK(I,f,\sigma)=\dim_EK(f(I),1,1).
$$
As an special case, when $I=L$, we have $\dim_E K(L,f,\sigma)=\dim_E K(L,1,1)$. But by an easy argument, one can see that
$$
\dim_E N(L, 1,1)=([E:\mathbb{Q}]-1)\dim_E L,
$$
so we have
$$
\dim_E K(L,f,\sigma)=\dim_E L.
$$
\end{proof}

We can say more about the structure of $K(L,f,\sigma)$. Let $a\to \bar{a}$ be the embedding of $L$ in $K(L,f,\sigma)$. We have,

\begin{proposition}
By the above notations,
$$
K(L,f,\sigma)=\{ \bar{a}:\ a\in L\}
$$
and for all $x\in E$, $x.\bar{a}=\overline{f^{-1}(\sigma(x)f(a))}$.
\end{proposition}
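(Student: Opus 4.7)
The plan is to prove the scalar-action formula first and then to derive the spanning statement as an immediate corollary. By definition of the quotient, every element of $K(L,f,\sigma)$ is a coset of the form $\sum_i x_i\otimes a_i + N(L,f,\sigma)$, which by the $E$-linearity of the quotient map equals $\sum_i x_i\cdot\overline{a_i}$. Hence once the identity $x\cdot\overline{a}=\overline{f^{-1}(\sigma(x)f(a))}$ is in place, the general coset becomes
$$\sum_i \overline{f^{-1}(\sigma(x_i)f(a_i))}=\overline{b},\qquad b=\sum_i f^{-1}(\sigma(x_i)f(a_i))\in L,$$
so every element of $K(L,f,\sigma)$ has the form $\bar a$ for some $a\in L$.

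The main step is therefore to prove the formula. My plan is to show directly that
$$x\otimes a-1\otimes f^{-1}(\sigma(x)f(a))\in N(L,f,\sigma).$$
Recalling that $N(L,f,\sigma)$ is precisely the kernel of the $\mathbb{Q}$-linear map induced by $\varphi(y,b)=\sigma(y)f(b)$, I would evaluate this map on the displayed element. This gives
$$\sigma(x)f(a)-\sigma(1)\,f\bigl(f^{-1}(\sigma(x)f(a))\bigr)=\sigma(x)f(a)-\sigma(x)f(a)=0,$$
using $\sigma(1)=1$ and $f\circ f^{-1}=\mathrm{id}_L$. Consequently $x\otimes a$ and $1\otimes f^{-1}(\sigma(x)f(a))$ represent the same coset, which is exactly the claim $x\cdot\overline{a}=\overline{f^{-1}(\sigma(x)f(a))}$.

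I do not anticipate any real obstacle; the argument is a direct unwinding of the definition of $N(L,f,\sigma)$ as the kernel of the evaluation map from Example 3.3. What makes the argument work in this special case (as opposed to arbitrary $I$ or arbitrary $\mathbb{Q}$-endomorphisms) is precisely the two ingredients used above: $\sigma$ is a field automorphism, so $\sigma(1)=1$, and $f$ is bijective on $L$, so $f^{-1}(\sigma(x)f(a))$ is a well-defined element of $L$. These two facts are what allow every scalar multiple of an embedded element to be rewritten as an embedded element, which in turn collapses $K(L,f,\sigma)$ onto $\{\bar a:a\in L\}$.
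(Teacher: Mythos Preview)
Your proof is correct and uses essentially the same computation as the paper: both arguments boil down to checking that a suitable tensor lies in $N(L,f,\sigma)$ by evaluating the defining map $(y,b)\mapsto\sigma(y)f(b)$ on it. The only difference is the order---the paper first proves the spanning statement (by writing $f(a)=\sum_i\sigma(x_i)f(a_i)$ and checking $1\otimes a-\sum_i x_i\otimes a_i\in N$) and then deduces the scalar-action formula from it, whereas you prove the scalar-action formula first and deduce spanning; but this is a stylistic reordering rather than a different approach.
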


\begin{proof}
Let $N=N(L,f,\sigma)$ and $X=\sum x_i\otimes a_i+N\in K(L,f,\sigma)$. We know that $\sum\sigma(x_i)f(a_i)\in L$, so for some $a\in L$, we have
$f(a)=\sum \sigma(x_i)f(a_i)$. Therefore $1\otimes a-\sum x_i\otimes a_i\in N$ and hence $X=\bar{a}$. Now, if $x\in E$, then
$$
x.\bar{a}=x\otimes a+N=1\otimes b+N,
$$
for some $b$. Clearly, $f(b)=\sigma(x)f(a)$, so $b=f^{-1}(\sigma(x)f(a))$. Hence
$$
x.\bar{a}=\overline{f^{-1}(\sigma(x)f(a))}.
$$
\end{proof}

Now, we show that nilpotency (solvablity) of $L$ implies nilpotency (solvablity) of its completions.

\begin{proposition}
Let $L$ be a rational Lie algebra and $K$ be a completion for $L$ over some field $E$. If $L$ is nilpotent, then $K$ is nilpotent of the same class. If $L$ is solvable, then $K$ is also solvable of the same derived length.
\end{proposition}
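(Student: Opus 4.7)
The plan is to compute the terms of the lower central series and derived series of $K$ in terms of those of $L$, using only the $E$-bilinearity of the bracket on $K$ together with the fact that $K=Span_E(L)$. Specifically, I will show by induction on $n\geq 1$ that
$$
\gamma_n(K)=Span_E(\gamma_n(L))\qquad\text{and}\qquad K^{(n)}=Span_E(L^{(n)}).
$$
Once these identities are in place the proposition is immediate: if $L$ is nilpotent of class exactly $c$, then $\gamma_{c+1}(K)=Span_E(\gamma_{c+1}(L))=0$, while $\gamma_c(K)\supseteq\gamma_c(L)\neq 0$ because $L\leq_{\mathbb{Q}}K$; this forces the class of $K$ to be exactly $c$. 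The analogous statement for the derived series yields the claim about the derived length.

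For the base case, $\gamma_1(K)=K=Span_E(L)=Span_E(\gamma_1(L))$ by the definition of a completion, and likewise $K^{(0)}=K=Span_E(L^{(0)})$. For the inductive step in the lower central series, assume $\gamma_n(K)=Span_E(\gamma_n(L))$. A typical generator of $\gamma_{n+1}(K)=[K,\gamma_n(K)]$ has the form
$$
\Big[\sum_i x_ia_i,\ \sum_j y_jb_j\Big]=\sum_{i,j}x_iy_j[a_i,b_j],
$$
with $x_i,y_j\in E$, $a_i\in L$, and $b_j\in\gamma_n(L)$; since each $[a_i,b_j]$ lies in $\gamma_{n+1}(L)$, the whole sum lies in $Span_E(\gamma_{n+1}(L))$. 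Conversely $\gamma_{n+1}(L)\subseteq\gamma_{n+1}(K)$ and $\gamma_{n+1}(K)$ is an $E$-subspace of $K$, which gives the reverse inclusion. The derived series is handled in exactly the same way: from $K^{(n)}=Span_E(L^{(n)})$ one expands $[K^{(n)},K^{(n)}]$ by $E$-bilinearity to see that it equals $Span_E(L^{(n+1)})$.

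The argument is very soft and I do not anticipate any real obstacle, since everything reduces to the single observation that $E$-bilinearity of the bracket propagates $E$-spans through successive commutators. The one point requiring a small amount of care is to distinguish between upper bounds on class or derived length (which come from the vanishing of $\gamma_{c+1}(L)$ or $L^{(d)}$) and sharpness (which comes from the embedding $L\leq_{\mathbb{Q}}K$, ensuring that the corresponding terms of the series for $K$ cannot collapse further than those of $L$).
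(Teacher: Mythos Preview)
Your proof is correct. The underlying content is the same as the paper's: both establish that $\gamma_n(K)$ is the $E$-span of $\gamma_n(L)$ (and likewise for the derived series), and then use the embedding $L\leq_{\mathbb{Q}}K$ to get sharpness. The difference is in packaging. The paper invokes its earlier description of $K$ as a quotient $(E\otimes_{\mathbb{Q}}L)/N$ by an entangled ideal and writes
\[
\gamma_n(K)=\frac{E\otimes_{\mathbb{Q}}\gamma_n(L)+N}{N},
\]
which under the quotient map $x\otimes a\mapsto xa$ is exactly your statement $\gamma_n(K)=Span_E(\gamma_n(L))$. You instead prove this identity directly by induction using only $E$-bilinearity of the bracket and the definition $K=Span_E(L)$. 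Your route is slightly more elementary and self-contained, since it does not appeal to the tensor-product machinery; the paper's route has the virtue of reusing the structural description it has already set up. Neither argument gains anything the other lacks.
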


\begin{proof}
We know that there is an entangled ideal $N$ such that
$$
K=\frac{E\otimes_{\mathbb{Q}}L}{N}.
$$
So, for any $n$ we have
$$
\gamma_n(K)=\frac{E\otimes_{\mathbb{Q}}\gamma_n(L)+N}{N}, \ \ K^{(n)}=\frac{E\otimes_{\mathbb{Q}}L^{(n)}+N}{N}.
$$
This shows that, if $\gamma_n(L)=0$ ($L^{(n)}=0$), then $\gamma_n(K)=0$ ($K^{(n)}=0$). On the other hand, if $\gamma_d(K)=0$ ($K^{(d)}=0$), then since $L\subseteq K$, we must have $\gamma_d(L)=0$ ($L^{(d)}=0$).
\end{proof}

A Lie algebra $L$ over $\mathbb{Q}$ is potentially finite dimensional, if it has a finite dimensional completion. Using a theorem of Ado (see \cite{Jac}), it is easy to prove that $L$ is potentially finite dimensional, if and only if $L\leq_{\mathbb{Q}}\mathfrak{g}\mathfrak{l}_n(E)$ for some field $E$ of characteristic zero and a natural number $n$. By the above proposition, we have

\begin{corollary}
Let $L$ be a potentially finite dimensional nilpotent rational Lie algebra. Then $L\leq_{\mathbb{Q}}\mathfrak{n}^+_n(E)$, for some field $E$ and $n\geq 1$, where $\mathfrak{n}^+_n(E)$ is the Lie algebra of strictly upper triangular $n\times n$ matrices over $E$.
\end{corollary}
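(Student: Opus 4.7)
The plan is to reduce the statement to the classical embedding theorem for finite dimensional nilpotent Lie algebras and combine it with the nilpotency transfer proved in the previous proposition.

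First, since $L$ is potentially finite dimensional by hypothesis, I fix a finite dimensional completion $(K,E)$ of $L$. By the preceding proposition, the nilpotency of $L$ implies that $K$ is nilpotent as an $E$-Lie algebra (in fact of the same class). So the task reduces to embedding the finite dimensional nilpotent $E$-Lie algebra $K$ into strictly upper triangular matrices over $E$, and then restricting scalars to $\mathbb{Q}$.

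Next, I invoke Ado's theorem (available in Jacobson, already cited in the paper right before the corollary): any finite dimensional Lie algebra over a field of characteristic zero admits a faithful finite dimensional representation. Applied to the nilpotent $K$, Ado's theorem gives a $\mathbb{Q}$-embedding (in fact an $E$-embedding) $\rho : K \hookrightarrow \mathfrak{gl}_n(E)$ for some $n$. Since $K$ is nilpotent, every $\rho(x)$ is an ad-nilpotent element of $\mathfrak{gl}_n(E)$, and in fact one can arrange that $\rho(K)$ consists of nilpotent matrices (this is the usual refinement of Ado for nilpotent Lie algebras; alternatively one uses that $\rho$ can be chosen so that $\rho(K)$ sits in a nilpotent subalgebra). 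Engel's theorem then yields a basis of $E^n$ in which every element of $\rho(K)$ is strictly upper triangular, giving an $E$-embedding $K \hookrightarrow \mathfrak{n}^+_n(E)$.

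Finally, since $L \leq_{\mathbb{Q}} K$ by the definition of completion, composing the inclusion $L \hookrightarrow K$ with the $E$-embedding $K \hookrightarrow \mathfrak{n}^+_n(E)$ and forgetting scalars down to $\mathbb{Q}$ produces the desired $\mathbb{Q}$-embedding $L \leq_{\mathbb{Q}} \mathfrak{n}^+_n(E)$. The only non-routine ingredient is the refinement of Ado guaranteeing that a finite dimensional nilpotent Lie algebra over a characteristic zero field embeds into strictly upper triangular matrices; this is a classical result available in Jacobson's book and is used here as a black box. Everything else (the nilpotency transfer and the scalar restriction) is immediate from the results already established.
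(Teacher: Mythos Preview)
Your argument is correct and matches the paper's intended proof: the corollary is stated without a separate proof, being an immediate consequence of the preceding proposition (nilpotency transfers from $L$ to its completion $K$) together with the classical fact, cited from Jacobson just before the corollary, that a finite dimensional nilpotent Lie algebra over a field of characteristic zero embeds into strictly upper triangular matrices. One small wording note: the claim that ``every $\rho(x)$ is an ad-nilpotent element of $\mathfrak{gl}_n(E)$'' is not quite what you need (ad-nilpotency in the ambient $\mathfrak{gl}_n(E)$ is not automatic from nilpotency of $K$), but you immediately invoke the correct ingredient, namely the refinement of Ado's theorem producing a faithful representation by nilpotent endomorphisms, so the argument stands.
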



\begin{thebibliography}{30}

\bibitem{Jac} N. Jacobson, Lie algebras, Dover Publications, (1971).

\bibitem{Rem1} A. G. Myasnikov, V. N. Remeslennikov, Exponential groups I: Foundations of the theory and tensor completions, Siberiam Math. Journal, Vol. 35, No. 5, 986-996, (1994).

\bibitem{Rem2} A. G. Myasnikov, V. N. Remeslennikov, Exponential groups II: Extensions of centralizers and tensor completions of CSA-groups, Int. J. algebra and computations, Vol. 6, No. 6, 687-711, (1996).



\end{thebibliography}
\end{document}